\documentclass[letterpaper,conference]{ieeeconf}
\IEEEoverridecommandlockouts                              % This command is only
\usepackage[utf8]{inputenc}
\usepackage{textcomp}

\usepackage{graphicx}
\usepackage{amsmath}
\usepackage[version=4]{mhchem}
\usepackage{siunitx}
\usepackage{amsthm}
\usepackage{graphicx}

\DeclareMathOperator{\sech}{sech}
\usepackage{longtable,tabularx}
\usepackage[style=ieee ,sorting=none,maxbibnames=99,giveninits, doi=false]{biblatex}
\usepackage{amsmath} % assumes amsmath package installed
\usepackage{amsfonts}

\newtheorem{theorem}{Theorem}
\newtheorem{assumption}{Assumption}
\newtheorem{proposition}{Proposition}[section]
\newtheorem{remark}{Remark}

\usepackage{float} 
\usepackage{tikz}
\usetikzlibrary{shapes,arrows,calc,positioning}
\tikzstyle{bigblock} = [draw, fill=blue!20, rectangle, 
    minimum height=6em, minimum width=8em]
\tikzstyle{medblock} = [draw, fill=blue!20, rectangle, 
    minimum height=4em, minimum width=4em]    
\tikzstyle{mux} = [draw, fill=black!20, rectangle, 
    minimum height=5em, minimum width=0.1em]    
\tikzstyle{smallblock} = [draw, fill=blue!20, rectangle, 
    minimum height=2em, minimum width=3em]
    
\tikzstyle{data_block} = [draw, fill=green!20, rectangle, 
    minimum height=2em, minimum width=3em]
\tikzstyle{ops_block} = [draw, fill=blue!20, rectangle, 
    minimum height=2em, minimum width=3em]    
\tikzstyle{est_block} = [draw, fill=red!20, rectangle, 
    minimum height=2em, minimum width=3em]    
    
\tikzstyle{sum} = [draw, fill=blue!20, circle, node distance=1cm,minimum height=0.5cm]
\tikzstyle{signal} = [coordinate]
\tikzstyle{pinstyle} = [pin edge={to-,thin,black}]
\tikzstyle{block} = [draw, fill=blue!20, rectangle, 
    minimum height=3em, minimum width=9em]
\tikzstyle{blockS} = [draw, fill=blue!20, rectangle, 
    minimum height=3em, minimum width=4em]    
\tikzstyle{input} = [coordinate]
\tikzstyle{output} = [coordinate]
\usetikzlibrary{matrix}

\usetikzlibrary{positioning}
\usetikzlibrary{math}

\usepackage{hyperref}
\hypersetup{
    colorlinks,
    linkcolor={blue!100!black},
    citecolor={blue!50!black},
    urlcolor={blue!80!black}
}

\newcommand{\bc}{\begin{center}}
\newcommand{\ec}{\end{center}}
\newcommand{\benum}{\begin{enumerate}}
\newcommand{\eenum}{\end{enumerate}}
\newcommand{\nn}{\nonumber}
\newcommand{\matl}{\left[ \begin{array}}
\newcommand{\matr}{\end{array} \right]}

\renewcommand{\matl}{\begin{bmatrix}}
\renewcommand{\matr}{\end{bmatrix}}

\newcommand{\matls}{\left[ \begin{smallmatrix}}
\newcommand{\matrs}{\end{smallmatrix} \right]}
\newcommand{\isdef}{\stackrel{\triangle}{=}}

\newcommand{\inv}{^{-1}}

\newcommand{\rmb}{{\rm b}}

\newcommand{\rmd}{{\rm d}}

\newcommand{\rmt}{{\rm t}}

\newcommand{\BBR}{{\mathbb R}}

\newcommand{\SC}{{\mathcal C}}

\newcommand{\SF}{{\mathcal F}}
\newcommand{\SG}{{\mathcal G}}

\newcommand{\SSS}{{\mathcal S}}

\newcommand{\SV}{{\mathcal V}}

\let\labelindent\relax
\usepackage{enumitem,amssymb}
\newlist{todolist}{itemize}{2}
\setlist[todolist]{label=$\square$}
\usepackage{pifont}

\title{Constraint Enforcing Adaptive Control of Nonlinear Systems}
\title{Constraint Enforcing Adaptive Control of Nonlinear Systems with Unknown Input Effectiveness Parameter}
\title{Adaptive State-Constrained Tracking Control of Nonlinear Systems \\ via Constraint Lifting}
\title{Adaptive Constraint-Lifting Control with Stability and Invariance Guarantees}
\author{
Jhon Manuel Portella Delgado
and
Ankit Goel
\thanks{Jhon Manuel Portella Delgado and Ankit Goel are with the Department of Mechanical Engineering, University of Maryland, Baltimore County, 1000 Hilltop Circle, Baltimore, MD 21250. {\tt\small jportella, ankgoel@umbc.edu}}%
\thanks{Jhon Manuel Portella Delgado is now with the Department of Aerospace Engineering, University of Michigan, Ann Arbor, MI 48109, USA.}
}

\begin{document}

\maketitle

%%%%%%%%%%%%%%%%%%%%%%%%%%%%%%%%%%%%%%%%%%%%%%%%%%%%%%%%%%%%%%%%%%%%%%%%%%%%%%%%
\begin{abstract}

This paper develops an adaptive tracking controller for a class of nonlinear systems with parametric uncertainty subject to state constraints.
The system is characterized by a strict-feedback structure with unknown parameters entering both the drift and input channels.
The objective is to design a control law, without knowledge of the unknown parameters, that guarantees closed-loop stability, achieves desired tracking performance, and ensures forward invariance of a prescribed safe set.
An adaptive constraint-lifting framework is developed that transforms the constrained control problem into an equivalent unconstrained representation, enabling recursive controller synthesis in lifted coordinates. 
The proposed design integrates parameter estimation with constraint enforcement without requiring online optimization.
A Lyapunov-based stability analysis, combined with the Barbashin--Krasovskii--LaSalle invariance principle, establishes boundedness of all closed-loop signals, asymptotic convergence of the system states to the desired equilibrium, and forward invariance of the safe set under uncertainty. 
In particular, the analysis characterizes the largest invariant set of the closed-loop system and guarantees convergence despite unknown parameters.
The effectiveness of the proposed approach is demonstrated on a DC motor system with uncertain parameters, illustrating accurate tracking performance and safe operation.

% This paper studies adaptive tracking control for a class of nonlinear systems with parametric uncertainty subject to state constraints.
% The system is characterized by a strict-feedback structure with unknown parameters entering both the drift and input channels.
% The objective is to design a control law, without knowledge of the uncertain parameters, that guarantees closed-loop stability, achieves desired tracking performance, and ensures forward invariance of a prescribed safe set.
% We develop an adaptive control framework that integrates parameter estimation with a systematic constraint-enforcement mechanism, enabling simultaneous handling of uncertainty and state constraints. 
% The proposed approach does not require exact knowledge of system parameters and ensures that all closed-loop signals remain bounded while the state trajectory satisfies the prescribed constraints. 
% Rigorous analysis establishes stability and constraint satisfaction guarantees for the resulting closed-loop system.
% The effectiveness of the proposed method is demonstrated on a DC motor system with uncertain parameters.
% The results illustrate accurate tracking performance and safe operation despite unkwnon parameters.
\end{abstract}
% \keywords{one, two, three, four}
\textit{\bf keywords:} 
Adaptive control,
State-constrained systems,
Nonlinear systems,
Lyapunov stability.

%%%%%%%%%%%%%%%%%%%%%%%%%%%%%%%%%%%%%%%%%%%%%%%%%%%%%%%%%%%%%%%%%%%%%%%%%%%%%%%%
% \section{List of papers}
% \begin{enumerate}
%     % \item Review of all the stability theorems for continuous and discrete systems
%     % \item Application of the constraint lifting algorithm in an actual quadcopter or the new ROS robot, or both
%     % \item Analysis of the limit cycle that appears in the application of fixed-time estimators $--->$ this may be linked with my research with Professor Bernstein
%     % \item Extend the constraint-lifting framework to include control constraints
%     % \item Extension of the constraint-lifting framework to include Asymmetric bounds
%     % \item Work on a recovery control for the constraint-lifting framework
%     \item Use the constraint lifting framework with our adaptive control
%     % \item See the possibility of writing a book about the constraint-lifting framework
% \end{enumerate}

\section{Introduction}

The enforcement of state constraints in nonlinear systems is closely related to the problem of forward invariance of a prescribed safe set.
This problem arises naturally in safety-critical control applications, where physical limitations, actuator bounds, and safety constraints must be satisfied at all times.
Set-invariance methods provide the theoretical foundation for this problem, while constrained-control architectures, such as reference governors, offer practical approaches to enforce constraints without modifying a nominal stabilizing controller \cite{Blanchini1999, Gilbert1995, Garone2017}.

A widely used strategy for constrained control is to augment a nominal controller with a supervisory mechanism that adjusts the reference or control input when constraint violation is anticipated \cite{Gilbert1995, Garone2017}.
While effective, such approaches operate as add-on schemes and do not yield an explicit stabilizing control law for the constrained system itself.
Barrier-based methods, including control barrier functions, provide an alternative by enforcing forward invariance through pointwise inequality constraints on the control input \cite{ames2016control}.
However, these methods are typically implemented via online optimization, which may introduce computational overhead and complicate stability analysis, particularly for nonlinear systems and in the presence of uncertainty.

The challenge becomes more pronounced when system dynamics are uncertain\cite{portella2026adaptive}.
Adaptive control provides a principled framework for handling parametric uncertainty, but classical adaptive designs do not directly account for state constraints.
Incorporating constraints into adaptive control laws remains nontrivial, as the interaction between parameter estimation, transient behavior, and constraint satisfaction can lead to constraint violations or loss of stability guarantees.

An alternative approach to constraint enforcement is based on nonlinear coordinate transformations that map a constrained state space into an unconstrained one, enabling the use of standard nonlinear control tools in the transformed coordinates \cite{Tee2009, Guo2014}.
While such transformation-based methods avoid online optimization, they may suffer from numerical ill-conditioning near the constraint boundary, where the transformed coordinates or their derivatives become large.
These challenges are further exacerbated in adaptive settings, where parameter-estimation dynamics can drive the system toward regions in which the transformation becomes poorly conditioned.

Motivated by these limitations, this paper develops an adaptive constraint-lifting framework for continuous-time nonlinear systems.
The central idea is to map constrained states into an unconstrained coordinate system via smooth constraint-lifting transformations constructed from strictly increasing sigmoid functions.
Controller synthesis is carried out in the lifted coordinates using a recursive adaptive design, while the inverse transformation guarantees satisfaction of the original state constraints.

% The contributions of this paper are as follows.
% First, an adaptive constraint-lifting control framework is developed for a class of continuous-time strict-feedback systems with parametric uncertainty, transforming the constrained adaptive control problem into an equivalent unconstrained one.
% Second, conditions on the controller gains and adaptation laws are derived to ensure both asymptotic stability of the closed-loop system and forward invariance of the admissible domain of the lifting transformations.
% Third, the analysis establishes the boundedness of the parameter estimates and guarantees satisfaction of the constraints during both transient and steady-state operation.

The contributions of this paper are as follows.
First, an adaptive constraint-lifting control framework is developed for a class of continuous-time strict-feedback systems with parametric uncertainty, transforming the constrained adaptive control problem into an equivalent unconstrained one.
Second, a recursive adaptive controller is constructed in the lifted coordinates, together with adaptation laws that compensate for uncertainty in both the drift and input channels.
Third, a Lyapunov-based stability analysis, combined with the Barbashin--Krasovskii--LaSalle invariance principle, establishes the asymptotic convergence of the system states, the boundedness of all closed-loop signals, and the forward invariance of the prescribed safe set.
In particular, the analysis characterizes the largest invariant set of the closed-loop system and guarantees convergence despite unknown parameters.
The proposed approach yields an explicit adaptive control law, avoids online optimization, and ensures constraint satisfaction under uncertainty.
Numerical simulations illustrate the effectiveness of the proposed framework on representative nonlinear systems.

The remainder of the paper is organized as follows. 
Section \ref{sec:problem} formulates the problem and introduces the class of nonlinear systems considered in this work. 
Section \ref{sec:Constraint_Lifting} presents the constraint-lifting framework, develops the adaptive control law, and provides the Lyapunov-based stability analysis. 
Section \ref{sec:numerical} demonstrates the effectiveness of the proposed approach through numerical simulations. 
Finally, Section \ref{sec:conclusions} concludes the paper.

% \newpage
\section{Problem Formulation}
\label{sec:problem}
Consider the system
\begin{align}
    \dot{x}_1 
        &= 
            g_1(x_1)  x_2, 
    \label{eq:x1_dot}
    \\
    \dot{x}_2 
        &= 
            f_2(x_1, x_2)\,\Theta_1 
            + 
            g_2(x_1, x_2)\, u\,\Theta_2,
    \label{eq:x2_dot}
\end{align}
where $x_1 \in \mathbb{R},$ and $x_2 \in \mathbb{R}$ are the states, $u \in \mathbb{R}$ is the control input, and $\Theta_1, \Theta_2 \in \mathbb{R}$ are unknown parameters, and the functions $g_1: \mathbb{R} \to \mathbb{R},$ $f_2: \mathbb{R} \times \mathbb{R} \to \mathbb{R},$ and $g_2: \mathbb{R} \times \mathbb{R} \to \mathbb{R}.$
The control objective is to design a control law $u$ without the knowledge of the unknown parameters $\Theta_1$ and $\Theta_2$, such that,
$i)$ the closed-loop system is stable,
$ii)$ a desired tracking performance is achieved, and
$iii)$ the state trajectory satisfies $x \in \SSS$, where $\SSS$ is a user-specified safe set.
The safe set $\SSS$ is assumed to be a compact hyper-rectangle of the form $\SSS = (-\overline{x}_1, \overline{x}_1) \times (-\overline{x}_2, \overline{x}_2)$.

\begin{assumption}
    \label{ass:function_zero}
    The function $f_2(x_1,x_2) = 0$ if and only if $x_2 = 0.$
\end{assumption}

\begin{assumption}
    \label{ass:function_invert}
    The matrices $g_1(x_1)$ and $g_2(x_1,x_2)$ are nonsingular for $(x_1,x_2) \in \SSS.$ 
\end{assumption}

\begin{remark}
Assumption \ref{ass:function_zero} ensures that the uncertain dynamics associated with $\Theta_1$ are excited whenever $x_2 \neq 0$, and vanish only at $x_2 = 0$. 
This condition prevents loss of excitation in the system and guarantees that the effect of the uncertainty is observable whenever the state is evolving.
\end{remark}

\begin{remark}
Assumption \ref{ass:function_invert} guarantees that the system remains free of singularities within the safe set $\SSS$. 
In particular, $g_1(x_1) \neq 0$ ensures that the state $x_2$ influences the evolution of $x_1$, while $g_2(x_1,x_2) \neq 0$ ensures that the control input $u$ influences the evolution of $x_2.$
Together, these conditions ensure well-posedness of the control design and avoid loss of controllability within $\SSS$.
\end{remark}

Dynamics \eqref{eq:x1_dot}--\eqref{eq:x2_dot} arises naturally in a variety of engineering applications. 
A representative example is a DC motor driving a mechanical load, whose equations of motion are given by
\begin{align}
    \dot{\theta} 
        &= 
            \omega,
    \label{eq:x1_dc_motor}
    \\
    \dot{\omega} 
        &= 
            -
            \frac{bR - K_\rmb K_\rmt}{JR}
            \,
            \omega 
            + 
            \frac{K_\rmt}{JR}\, 
            V,
    \label{eq:x2_dc_motor}
\end{align}
where $\theta$ and $\omega$ are the shaft angle and angular velocity, $V$ is the applied voltage, $J$ is the rotor inertia, $b$ is the viscous damping coefficient, $R$ is the armature resistance, and $K_\rmt$, $K_\rmb$ are the torque and back-EMF constants, respectively. 
This model fits \eqref{eq:x1_dot}--\eqref{eq:x2_dot} with 
$x_1 = \theta$, $x_2 = \omega$, 
$g_1(x_1) = 1$, 
$f_2(x_1, x_2) = \omega$, 
$g_2(x_1, x_2) = 1$, 
$u = V$, 
and unknown parameters
\begin{align}
    \Theta_1 
        = 
            -\frac{bR - K_\rmb K_\rmt}{JR}, 
    \qquad 
    \Theta_2 
        = 
            \frac{K_\rmt}{JR},
\end{align}
which are uncertain in practice due to manufacturing tolerances, temperature variation, and component wear.

Note that the DC motor system fullfills Assumptions \ref{ass:function_zero} and \ref{ass:function_invert} are fulfilled since $f_2 = \omega = 0$ if and only if $x_2 = \omega = 0,$ and $g_1 = g_2 = 1,$ are invertible.

\section{Constraint Lifting Framework}
\label{sec:Constraint_Lifting}

This section presents the constraint-lifting framework for transforming the state-constrained control problem into an equivalent unconstrained representation. 
The original states are first normalized and mapped into a lifted coordinate space via smooth sigmoid-based transformations, enabling controller synthesis without explicit constraint handling. 
The resulting dynamics in the lifted coordinates are derived, and key structural properties required for control design are established.
Based on this formulation, a recursive adaptive control law is developed, followed by a Lyapunov-based analysis that guarantees closed-loop stability, boundedness of all signals, and forward invariance of the prescribed safe set.

The states to be constrained are \textit{lifted} using a constraint-lifting transformation to obtain an unconstrained representation in a lifted state space.
The sigmoid functions and their inverses to lift the constraints are described in detail in \cite{portella2025constrained}.

First, the states $(x_1,x_2)$ are mapped to intermediate variables $(\chi_1,\chi_2)$ such that the bounds on the components of $x_1$ and $x_2$ are normalized to $\pm1$. 
Consequently, whenever $(x_1,x_2)\in\SSS$, each component of $(\chi_1,\chi_2)$ lies in $(-1,1)$, that is, $(\chi_1,\chi_2)\in\SC$.
The normalized variables $(\chi_1,\chi_2)$ are then mapped to $(z_1,z_2)\in\BBR^n$ using a constraint-lifting function. 
Controller synthesis is performed in the lifted $z$-coordinates for the transformed state $x_{1\rmd}$.
Finally, a transformation from $(z_1,z_2)$ to $(\zeta_1,\zeta_2)$ is introduced to simplify both the inverse mapping from $z$ to $x$ and the subsequent stability analysis.

The sequence of transformations from $x$ to $z$, together with the intermediate variables $\chi$ and $\zeta$, is shown in Figure \ref{fig:state_transformation}.

\begin{figure}[!ht]
\centering
    {%
    \begin{tikzpicture}[>={stealth'}, line width = 0.25mm]

    % Reference input
    \node [input, name=ref]{};
    
    % Adaptive Controller block
    \node [smallblock, fill=green!20, rounded corners, right = 0.5cm of ref , 
           minimum height = 0.6cm, minimum width = 0.7cm] 
           (controller) {$\dfrac{x}{\overline{x}}$};
    
    \node [smallblock, rounded corners, right = 2cm of controller, 
           minimum height = 0.6cm , minimum width = 0.7cm] 
           (integrator)            {$\overline x\,\phi(\chi)$};
    
    \node [smallblock, fill=green!20, rounded corners, below = 0.5cm of integrator, 
           minimum height = 0.6cm , minimum width = 0.7cm] 
           (system) {$\dfrac{z}{\overline{x}}$};
           
    \node [smallblock, rounded corners, below = 0.5cm of controller, 
           minimum height = 0.6cm , minimum width = 0.7cm] 
           (integrator2)            {$\overline x\,\psi(\zeta)$};

    % Output node
    \node [output, right = 1.0cm of system] (output) {};
    
    % Midpoint for feedback to controller
    
    \node [input, left = 1.0cm of controller] (reference) {};
    
    % Arrows
    \draw [->] (controller) -- node [above] {$\chi\in \SC$} (integrator);
    \draw [->] (integrator.0) -- +(1,0) node [above, xshift = -1em] {$z\in\BBR^n$} |-  (system.0);
    \draw [->] (system.180)  -- node [above] {$\zeta \in \BBR^n$} (integrator2.0);
    \draw [->] (integrator2.180) -- +(-1,0) |- node [above, xshift = 1em] {$x \in \SSS$} (controller.180);
    \end{tikzpicture}
    }  
    \caption{
        State transformations with the constraint-lifting function and its inverse used in the control synthesis.
    }
    \label{fig:state_transformation}
\end{figure}
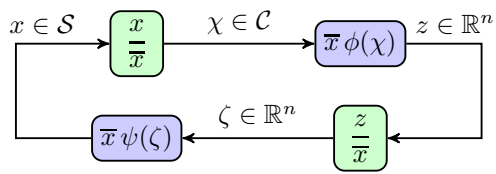
Define the normalized states
\begin{align}
    \chi_1
        &\isdef
            \overline{x}_1^{-1}
            x_1,
    \\
    \chi_2
        &\isdef
            \overline{x}_2^{-1}
            x_2.
\end{align}
Note that the state variables $\chi_1$ and $\chi_2$ are defined to map the constraint boundaries to $\pm 1,$ that is, when the constraints are satisfied, then, $\chi_{1}, \chi_{2} \in \SC.$

Next, define
\begin{align}
    z_1 
        &\isdef
            \overline{x}_1
            \phi(\chi_1),
    \label{eq:x1_to_z1}
    \\
    z_2 
        &\isdef
            \overline{x}_2
            \phi(\chi_2),
    \label{eq:x2_to_z2}
\end{align}
where $\phi \colon \SC  \to \BBR^n $ is a constraint-lifting function as defined in \cite{portella2025constrained}.
\begin{remark}
    For simplicity, the same constraint-lifting function $\phi$ is used for both $z_1$ and $z_2$. 
    This choice is made without loss of generality, as the formulation readily extends to the case where distinct constraint-lifting functions are assigned to each state.
\end{remark}

Next, define
\begin{align}
    \zeta_{1}
        &\isdef
            \overline{x}_1^{-1}
            z_1,
    \label{eq:zeta1_def}
    \\
    \zeta_{2}
        &\isdef
            \overline{x}_2^{-1}
            z_2.
    \label{eq:zeta2_def}
\end{align}
The state variables $\zeta_1$ and $\zeta_2$ are defined to simplify the controller synthesis, as described below. 

Finally, define
\begin{align}
    x_1 
        &=
            \overline{x}_1
            \psi(\zeta_1),
    \label{eq:z1_to_x1}
    \\
    x_2 
        &=
            \overline{x}_2
            \psi(\zeta_2),
    \label{eq:z2_to_x2}
\end{align}
where $\psi:\mathbb{R}^n \to \SSS$ is an inverse of the constraint-lifting function as defined in \cite{portella2025constrained}.

With the state transformations defined above, the dynamics \eqref{eq:x1_dot}, \eqref{eq:x2_dot} in terms of the transformed state $z$ is
\begin{align}
    \dot{z}_1
        &=
            \SG_1(z_1,z_2), 
    \label{eq:z1_dot}
    \\
    \dot{z}_2
        &=
            \SF_2(z_1,z_2)\Theta_1  + \SG_2(z_1,z_2) u\,\Theta_2,
    \label{eq:z2_dot}
\end{align}
where 
\begin{align}
    \SG_1(z_1,z_2)
        &\isdef 
            \Phi(\zeta_1) 
            \psi(\zeta_2), 
    \label{eq:SG1def}
    \\
    \SF_2(z_1,z_2)
        &\isdef
            \left[ 
                \partial_{\chi_2} 
                \phi(\psi(\zeta_2))
            \right]
                f_2
                (
                    \overline{x}_1
                    \psi(\zeta_1),
                    \overline{x}_2
                    \psi(\zeta_2)
                ), 
    \label{eq:SF2def}
    \\
    \SG_2(z_1,z_2)
        &\isdef
            \left[ 
                \partial_{\chi_2} 
                \phi(\psi(\zeta_2))
            \right]
            g_2
                (
                    \overline{x}_1
                    \psi(\zeta_1),
                    \overline{x}_2
                    \psi(\zeta_2)
                ),
    \label{eq:SG2def}
\end{align}
and
\begin{align}
    \Phi(\zeta_1) 
        \isdef
            \left[ \partial_{\chi_1} \phi(\psi(\zeta_1)) \right] 
            g_1(\overline{x}_1\psi(\zeta_1))
            \overline{x}_2.
    \label{eq:Phi_def}
\end{align}
Note that $\zeta_1$ and $\zeta_2$ are given by \eqref{eq:zeta1_def}, \eqref{eq:zeta2_def}.
Algebraic details of the derivation of dynamics in $z$ coordinates are shown in Appendix \ref{ap:derivation of z_dynamics}.

The following propositions establish key properties of the functions $\SG_1$, $\SF_2$, and $\SG_2$, which are subsequently used in the controller design and stability analysis.

\begin{proposition}
    \label{prop:equil_point_z1_dot}
    Consider $\SG_1(z_1,z_2) \in \BBR^{n}$ given by \eqref{eq:SG1def}.
    Then, $\SG_1(z_1,0) = 0.$
\end{proposition}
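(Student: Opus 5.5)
The plan is to evaluate the definition \eqref{eq:SG1def} directly at $z_2=0$. By \eqref{eq:zeta2_def}, $z_2 = 0$ gives $\zeta_2 = \overline{x}_2^{-1} z_2 = 0$. Then $\SG_1(z_1,0) = \Phi(\zeta_1)\,\psi(0)$. The statement therefore reduces to two facts:
\begin{itemize}
    \item[(a)] $\psi(0)=0$;
    \item[(b)] $\Phi(\zeta_1)$ is finite for every $z_1\in\BBR^n$.
\end{itemize}

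For (a), I will use the properties of the constraint-lifting function and its inverse from \cite{portella2025constrained}. There $\phi$ is built from a strictly increasing, odd sigmoid-type map with $\phi(0)=0$, and $\psi$ is its inverse. So $\psi(0)$ is the unique $\chi$ with $\phi(\chi)=0$, namely $\chi=0$. Equivalently, $z_2=0$ corresponds through \eqref{eq:x2_to_z2} and \eqref{eq:z2_to_x2} to $x_2=\overline{x}_2\psi(0)=0$. This matches the physical meaning: the original dynamics \eqref{eq:x1_dot} satisfy $\dot x_1 = g_1(x_1)x_2 = 0$ when $x_2=0$.

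For (b), observe that $\zeta_1=\overline{x}_1^{-1}z_1$ is finite for any finite $z_1$. Hence $\psi(\zeta_1)\in(-1,1)$, so $\overline{x}_1\psi(\zeta_1)\in(-\overline{x}_1,\overline{x}_1)$ lies in the interior of the safe-set projection. On this open interval $\partial_{\chi_1}\phi$ is finite, because $\phi$ is smooth on $\SC$ and blows up only at $\pm1$. Also $g_1$ is a well-defined real-valued function, and Assumption \ref{ass:function_invert} is not even needed for this step. So $\Phi(\zeta_1)$ in \eqref{eq:Phi_def} is a finite real number, and multiplying it by $\psi(0)=0$ gives $\SG_1(z_1,0)=0$.

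The main obstacle is (a). It relies on the specific normalization $\phi(0)=0$ of the lifting function from \cite{portella2025constrained}, which this paper has not restated. I would state explicitly that the sigmoid used there is odd and centered at the origin, hence $\psi(0)=0$. Everything else is a direct substitution.
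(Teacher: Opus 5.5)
Your proposal is correct and follows the paper's proof: it evaluates $\SG_1(z_1,0)=\Phi(\overline{x}_1^{-1}z_1)\,\psi(0)$ and uses $\psi(0)=0$, which the paper takes as part of the definition of the inverse lifting function. Your extra check in (b), that $\Phi(\zeta_1)$ is finite for every finite $z_1$, is a harmless piece of added rigor that the paper leaves implicit.
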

\begin{proof}
    Since $\psi(0)=0$ by definition, it follows from \eqref{eq:SG1def} that 
    $\SG_1(z_1,0) = \Phi(\overline{x}_1^{-1} z_1) \psi(0)=0.$
\end{proof}

\begin{proposition}
    \label{prop:equil_point_z2_dot}
    Consider $\SF_2(z_1,z_2) \in \BBR^n$ given by \eqref{eq:SF2def}. 
    Then, $\SF_2(z_1,0) = 0.$
\end{proposition}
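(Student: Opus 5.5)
The plan mirrors the proof of Proposition \ref{prop:equil_point_z1_dot}. Setting $z_2=0$ gives $\zeta_2 = \overline{x}_2^{-1} z_2 = 0$ by \eqref{eq:zeta2_def}. Since $\psi(0)=0$ by the construction of the inverse constraint-lifting function in \cite{portella2025constrained}, the second argument of $f_2$ in \eqref{eq:SF2def} becomes $\overline{x}_2\psi(0) = 0$. The first argument $\overline{x}_1\psi(\zeta_1)$ remains arbitrary.

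Next I would invoke Assumption \ref{ass:function_zero}. Its ``if'' direction says that $f_2(x_1,x_2)=0$ whenever $x_2=0$, so
\begin{align*}
    f_2\bigl(\overline{x}_1\psi(\overline{x}_1^{-1}z_1),\,0\bigr) = 0
\end{align*}
for every $z_1$. Only this direction of the assumption is needed; the converse plays no role here.

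It then remains to show that the prefactor $\partial_{\chi_2}\phi(\psi(\zeta_2))$, evaluated at $\zeta_2=0$, is finite. We have $\psi(0)=0\in\SC$, and $\phi$ is a smooth, strictly increasing sigmoid-type lifting function on $\SC$. Its derivative at $\chi_2=0$, an interior point of $\SC$, is therefore a well-defined finite number. Multiplying this finite factor by zero gives $\SF_2(z_1,0)=0$.

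The only step needing care is this finiteness of the derivative. The derivative of $\phi$ blows up only as $\chi_2\to\pm1$, and $\chi_2=0$ lies strictly inside the interval. This is guaranteed by the smoothness of $\phi$ on the open set $\SC$ as stated in \cite{portella2025constrained}.
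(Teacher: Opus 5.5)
Your proof is correct and follows the paper's argument: $\psi(0)=0$ sets the second argument of $f_2$ to zero, and the ``if'' direction of Assumption~\ref{ass:function_zero} then gives $f_2=0$. Your extra check that the prefactor $\partial_{\chi_2}\phi(0)$ is finite, since $0$ is an interior point of $\SC$, makes explicit a detail the paper leaves implicit, without changing the route.
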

\begin{proof}
    Since $\psi(0)=0$ by definition, it follows from Assumption \ref{ass:function_zero} that $f_2
                (
                    \overline{x}_1
                    \psi(\zeta_1),
                    0
                ) = 0,$
    which in turn implies $\SF_2(z_1,0) = 0.$
\end{proof}

\begin{proposition}
    \label{prop_SG2_invertible}
    Consider $\SG_2(z_1,z_2) \in \BBR^{n\times n}$ given by \eqref{eq:SG2def}.
    Then, $\SG_2(z_1,z_2)$ is nonsingular. 
\end{proposition}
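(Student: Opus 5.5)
The plan is to write $\SG_2(z_1,z_2)$ as the product in \eqref{eq:SG2def} and show each factor is nonsingular. A product of nonsingular square matrices is nonsingular; in the scalar case this just says a product of nonzero numbers is nonzero.

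\textbf{Step 1 (the Jacobian factor).} Fix $z_2$, set $\zeta_2=\overline{x}_2^{-1}z_2$ by \eqref{eq:zeta2_def}, and let $\chi_2=\psi(\zeta_2)$. Since $\psi$ maps into the normalized box, we have $\chi_2\in\SC$, so $\phi$ and its derivative are defined there.

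By construction in \cite{portella2025constrained}, $\phi$ is built componentwise from the inverse of a strictly increasing smooth sigmoid. Hence $\partial_{\chi_2}\phi(\chi_2)$ is diagonal with every diagonal entry strictly positive on $\SC$, and therefore nonsingular.

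An alternative route uses $\phi(\psi(\zeta))=\zeta$. Differentiating gives $\partial_{\chi}\phi(\psi(\zeta))\,\partial_\zeta\psi(\zeta)=I$, which again shows the factor is invertible.

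\textbf{Step 2 (the $g_2$ factor).} The point $(\overline{x}_1\psi(\zeta_1),\overline{x}_2\psi(\zeta_2))$ has normalized coordinates in $\SC$, so it lies in $\SSS$. Assumption \ref{ass:function_invert} then gives that $g_2$ evaluated at this point is nonsingular.

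\textbf{Step 3 (conclusion).} Combining Steps 1 and 2, $\det\SG_2=\det[\partial_{\chi_2}\phi]\cdot\det g_2\neq 0$ for every $(z_1,z_2)$ in the lifted space. Hence $\SG_2(z_1,z_2)$ is nonsingular.

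The main obstacle is Step 1, because it depends on properties of $\phi$ that are only cited, not stated, here. Specifically, $\phi$ must be a diffeomorphism from $\SC$ onto $\BBR^n$ with a nonvanishing (positive diagonal) Jacobian; a merely strictly increasing function could have zero derivative at isolated points. I would rely on the inverse-function identity $\phi\circ\psi=\mathrm{id}$ with $\psi$ differentiable, as given in \cite{portella2025constrained}. That identity forces the Jacobian of $\phi$ to be invertible at every point of $\SC$.
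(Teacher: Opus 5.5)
Your proposal is correct and follows the same route as the paper. You write $\SG_2$ as the product of the Jacobian factor $\partial_{\chi_2}\phi(\psi(\zeta_2))$ and $g_2$ evaluated at a point of $\SSS$, invoke Assumption~\ref{ass:function_invert} for $g_2$, and conclude because a product of nonsingular factors is nonsingular; your Step~1 (via $\phi\circ\psi=\mathrm{id}$) and your check that $(\overline{x}_1\psi(\zeta_1),\overline{x}_2\psi(\zeta_2))\in\SSS$ make explicit two facts the paper's proof uses only tacitly.
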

\begin{proof}
    It follows from Assumption \ref{ass:function_invert} that $g_2
                (
                    \overline{x}_2
                    \psi(\zeta_1),
                    \overline{x}_2
                    \psi(\zeta_2)
                ) $ is nonsingular.
    Since the product of two nonsingular matrices is a nonsingular matrix, it follows that $\SG_2(z_1,z_2)$ is nonsingular.
\end{proof}

\begin{proposition}
    \label{prop:PHI_invertible}
    Consider $\Phi(\zeta_1) \in \mathbb{R}^{n \times n}$ given by \eqref{eq:Phi_def}. 
    Then, $\Phi(\zeta_1)$ is nonsingular.
\end{proposition}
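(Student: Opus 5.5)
The plan is to read $\Phi(\zeta_1)$ in \eqref{eq:Phi_def} as a product of three factors and show that each one is nonsingular. This mirrors the argument used for Proposition \ref{prop_SG2_invertible}. The three factors are the Jacobian $\partial_{\chi_1}\phi(\psi(\zeta_1))$, the matrix $g_1(\overline{x}_1\psi(\zeta_1))$, and the scalar bound $\overline{x}_2$. Once all three are shown to be nonsingular, the product is nonsingular, since the determinant of a product is the product of the determinants.

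\textbf{Scalar factor.} Because $\SSS$ is a nondegenerate hyper-rectangle, $\overline{x}_2>0$. Hence multiplying by $\overline{x}_2$ (or by $\overline{x}_2 I$ in the vector case) preserves nonsingularity.

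\textbf{The factor $g_1$.} Since $\psi$ maps $\BBR^n$ into the open normalized box $\SC$, the point $x_1=\overline{x}_1\psi(\zeta_1)$ has each component in $(-\overline{x}_1,\overline{x}_1)$. It therefore corresponds to a state in $\SSS$ for any admissible $x_2$. Assumption \ref{ass:function_invert} then gives that $g_1(\overline{x}_1\psi(\zeta_1))$ is nonsingular.

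\textbf{The Jacobian factor.} This is the step that needs actual content. I would use the properties of the constraint-lifting function from \cite{portella2025constrained}.
\begin{itemize}
\item $\phi$ is built componentwise from strictly increasing smooth sigmoid inverses. Its Jacobian $\partial_{\chi}\phi$ is therefore diagonal, and each diagonal entry is the derivative of a strictly increasing $C^1$ scalar function on $(-1,1)$.
\item Each such derivative is strictly positive on the open interval. The reason is that $\psi$ is a smooth inverse of $\phi$: differentiating $\phi(\psi(\zeta))=\zeta$ gives $\partial_\chi\phi(\psi(\zeta))\,\partial_\zeta\psi(\zeta)=I$.
\item This identity exhibits an explicit inverse, namely $\partial_\zeta\psi(\zeta_1)$. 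So $\partial_{\chi_1}\phi(\psi(\zeta_1))$ is nonsingular for every $\zeta_1$, with no sign analysis required.
\end{itemize}

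The main obstacle is that this last step depends on properties of $\phi$ and $\psi$ stated only in \cite{portella2025constrained}. Specifically, it needs $\psi$ to be a differentiable two-sided inverse of $\phi$ on $\SC$, with values strictly inside $\SC$. I would state these properties explicitly as the facts being invoked, and I would prefer the inverse-function identity over a positivity argument, since it is cleaner. Combining the three factors then completes the proof.
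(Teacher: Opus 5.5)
Your proposal is correct and follows the paper's argument: nonsingularity of $g_1(\overline{x}_1\psi(\zeta_1))$ from Assumption~\ref{ass:function_invert}, together with positivity of $\overline{x}_2$. It is more complete than the paper's proof, which never addresses the Jacobian factor $\partial_{\chi_1}\phi(\psi(\zeta_1))$; your identity $\partial_\chi\phi(\psi(\zeta))\,\partial_\zeta\psi(\zeta)=I$ supplies exactly that missing step, and it is valid here because the paper's scalings give $\zeta_1=\phi(\chi_1)$ and $\chi_1=\psi(\zeta_1)$.
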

\begin{proof}
    It follows from Assumption \ref{ass:function_invert} that
    $g_1
    (
        \overline{x}_2
        \psi(\zeta_1),
    ) $
    is nonsingular.
    Furthermore, $\overline{x}_2$ is positive.
    Then, it follows that $\Phi(\zeta_1)$ is nonsingular.
\end{proof}

\subsection{Controller Synthesis}
\label{sec:ControlSynthesis}
Let $x_{1\rmd} \in \SSS$ be the desired value of $x_1.$
Let $\chi_{1\rmd} \in \SC$ and $z_{1\rmd}$ be the corresponding desired state values, that is, 
\begin{align}
    \chi_{1\rmd} 
        &\isdef 
        \overline{x}_1^{-1}
        x_{1\rmd},
        \\
    z_{1\rmd} 
        &\isdef 
            \overline{x}_1
            \phi(\chi_{1\rmd}).
    \label{eq:z1d}
\end{align}
Define the tracking error 
\begin{align}
    e_1 
        &\isdef 
            z_1 - z_{1\rmd}.
    \label{eq:e1_def}
\end{align}
\subsubsection{\texorpdfstring{$e_1$}{e1} stabilization}
Consider the function
\begin{align}
    V_1
        &\isdef
            \dfrac{1}{2}
            e_1^2.
    \label{eq:V1def}
\end{align}
Differentiating \eqref{eq:V1def} and using \eqref{eq:z1_dot} yields
\begin{align}
    \dot{V}_1
        &=
            e_1
            \SG_1(z_1,z_2).
    \label{eq:V1dot}
\end{align}

If $z_2$ is chosen such that $\SG_1(z_1,z_2) = - k e_1,$ where $k_1 > 0,$ then $\dot V_1 < 0.$
However, since $z_2$ is not the control input, $\SG_1(z_1,z_2)$ cannot be arbitrarily chosen. 
Instead, we define
\begin{align}
    e_2
        &\isdef
            \SG_1(z_1,z_2)
            -
            (- k_1e_1)
            =
                \Phi(\zeta_1) 
                \psi(\zeta_2)
                +
                k_1e_1.
    \label{eq:e2_def}
\end{align}
and design the control law to ensure that $e_2 $ converges to $ 0,$ thus implying that $\SG_1(z_1,z_2)$ converges to $ - k_1e_1.$ 
Substituting $\SG_1(z_1,z_2)$ from \eqref{eq:e2_def} into \eqref{eq:V1dot} yields
\begin{align}
    \dot{V}_1
        &=
            - k_1 e_1^2
            + e_2e_1.
\end{align}

\subsubsection{\texorpdfstring{$e_2$}{e2} stabilization}
Define $p_2 \isdef \Theta_2^{-1},$ and $\hat{p}_2$ its corresponding estimate.
Next, consider the function
\begin{align}
    V
        &\isdef
            V_1
            +
            % \sum_{i=1}^n
            \SV(\zeta_{2})
            +
            \dfrac{1}{2}
            \gamma^{-1}
            |\Theta_2|
            \left(
                \hat{p}_2
                -
                p_2
            \right)^2
            +
            \dfrac{1}{2}
            \alpha^{-1}
            \left(
                \Theta_1
                -
                \hat{\Theta}_1
            \right)^2,
    \label{eq:LyapCandidate}
\end{align}
where $\gamma >0,$ $\alpha > 0,$ and $\SV$ is a sigmoid integral function as introduced in \cite{portella2025constrained}.
Note that
\begin{align}
    \rmd_t
    % \sum_{i=1}^n
            \SV(\zeta_{2})
        &= 
            \psi(\zeta_2)
            % ^{\rmT}
            \overline{x}_2^{-1}
            \dot{z}_2. 
\end{align}
Thus,
\begin{align}
    \dot{V}
        &=
            - k_1e_1^2
            + e_2e_1
            + 
            \psi(\zeta_2)
            \overline{x}_2^{-1}
        % \nn \\
        % & \quad
            % \Big(
            \SF_2(z_1,z_2)\Theta_1
        \nn \\
        & \quad
            +
            \psi(\zeta_2)
            \overline{x}_2^{-1}
            \SG_2(z_1,z_2)u\Theta_2
            % \Big).
            +
            \gamma^{-1}
            |\Theta_2|
            \left(
                \hat{p}_2
                -
                p_2
            \right)
            \dot{\hat{p}}_2
        \nn \\
        & \quad
            -
            \alpha^{-1}
            \left(
                \Theta_1
                -
                \hat{\Theta}_1
            \right)
            \dot{\hat{\Theta}}_1.
\end{align} 
Choose
\begin{align}
    \dot{p}_2
        &=
            \gamma
            \,
            \sigma(\Theta_2)
            \psi(\zeta_2)
            \left(
                \SF_2(z_1,z_2)
                \hat{\Theta}_1
                +
                \Phi(\zeta_1)
                k_2
                e_2
            \right),
    \label{eq:p2_dot}
    \\
    \dot{\hat{\Theta}}_1
        &=
            \alpha
            \,
            \psi(\zeta_2)
            \SF_2(z_1,z_2),
    \label{eq:theta_1_hat_dot}
    \\
    u
        &=
            - 
            \overline{x}_2
            \SG_2(z_1,z_2)^{-1}
            \hat{p}_2
            \left(
                \SF_2(z_1,z_2)
                \hat{\Theta}_1
                +
                \Phi(\zeta_1)
                k_2
                e_2
            \right),
    \label{eq:u}
\end{align}
where $\sigma(\Theta_2) \isdef {\rm{sign}}(\Theta)_2.$
Then,
\begin{align}
    \dot{V}
        &=
           - 
           k_1e_1^2
           + 
           e_2e_1
           -
           \psi(\zeta_2)
           \Phi(\zeta_1)
           k_2
           e_2.
\end{align}
Substituting $\psi(\zeta_2)$ from \eqref{eq:e2_def} in the equation above yields
\begin{align}
    \dot{V}
        &=
            - k_1e_1^2
            + e_2e_1
            - 
            (e_2 - k_1e_1)
            k_2e_2
\end{align}
% Finally, choosing 
% \begin{align}
%     u
%         &=
%             - \SG_2(z_1,z_2)^{-1}
%             \left(
%                 \SF_2(z_1,z_2)
%                 +
%                 k_2
%                 D(\overline{x}_2)
%                 \Phi^{\rmT}(\zeta_1)
%                 e_2
%             \right)
%     \label{eq:u}
% \end{align}
yields
\begin{align}
    \dot{V}
        &=
            - k_1e_1^2
            + (1 + k_1k_2)e_2e_1
            - k_2e_2^2 
        \nn \\
        &=
            - k_1e_1^2
            + (1 + k_1k_2)e_2e_1
            - k_2e_2^2
        \nn \\
        & \quad
            + 2\sqrt{k_1k_2}e_2e_1
            - 2\sqrt{k_1k_2}e_2e_1
        \nn \\
        &=
            - 
            \left(
                \sqrt{k_1}e_1 - \sqrt{k_2}e_2
            \right)^2
        % \nn \\
        % & \quad
            + (1 + k_1k_2 - 2\sqrt{k_1k_2})e_2e_1.
            \label{eq:V_dot}
\end{align}

Note that if $k_2 = k_1^{-1},$ then,
\begin{align}
    \dot{V}
            &=
            - 
            \left(
                \sqrt{k_1}e_1 - \sqrt{k_2}e_2
            \right)^2
            % \left(\sqrt{k_1}e_1 - \sqrt{k_2}e_2\right).
            \label{eq:V_dot_k2condition}
\end{align}
Furthermore, if $k_1 e_1 = e_2,$ then $\dot V = 0.$

\subsection{Stability Analysis}
\label{sec:stability_analysis}
The stability analysis of the closed-loop dynamics with the proposed control law uses the Barbashin-Krasovskii-LaSalle invariance principle, which originally appeared as Theorems 1 and 2 in \cite{lasalle1960some}.
In particular, the Barbashin-Krasovskii-LaSalles's invariance principle is used to prove the asymptotic stability of the equilibrium point.

Propositions \ref{prop:equilibriumPoint}, \ref{prop:Omega_def}, \ref{prop:E_def}, and \ref{prop:M_def} introduced below define the sets used in the stability analysis and establish their properties.

\begin{proposition}
    \label{prop:equilibriumPoint}
    [\textbf{Equilibrium solution.}]
    Consider the system \eqref{eq:z1_dot}, \eqref{eq:z2_dot}.
    Let $z_{1\rmd} \in \BBR^n.$
    Then, the solution
    \begin{align}
        (z_1,z_2,u) 
            =
            \left(
                z_{1\rmd},
                0,
                0
            \right)
        \label{eq:equilibrium_point}
    \end{align}
    is an equilibrium point.
\end{proposition}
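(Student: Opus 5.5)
We verify directly that both right-hand sides of \eqref{eq:z1_dot} and \eqref{eq:z2_dot} vanish at the point \eqref{eq:equilibrium_point}. Then $(z_1(t),z_2(t)) \equiv (z_{1\rmd},0)$ with $u\equiv 0$ is a constant solution, which is the claim. No Lyapunov argument is needed; the statement only concerns the open-loop vector field evaluated at a particular state and input.

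\textbf{Step 1 ($\dot z_1$).} Evaluating \eqref{eq:z1_dot} at $z_2=0$ gives $\dot z_1 = \SG_1(z_{1\rmd},0)$. This is zero by Proposition \ref{prop:equil_point_z1_dot}, whose proof only uses $\psi(0)=0$. Note that $z_2=0$ means $\zeta_2=\overline{x}_2^{-1}\cdot 0 = 0$ by \eqref{eq:zeta2_def}.

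\textbf{Step 2 ($\dot z_2$).} Evaluating \eqref{eq:z2_dot} at $(z_{1\rmd},0)$ with $u=0$ gives $\dot z_2 = \SF_2(z_{1\rmd},0)\Theta_1 + \SG_2(z_{1\rmd},0)\cdot 0\cdot\Theta_2$. The first term vanishes by Proposition \ref{prop:equil_point_z2_dot}, which uses Assumption \ref{ass:function_zero} together with $\psi(0)=0$. The second term vanishes because $u=0$, since $\SG_2$ is finite there. Hence $\dot z_2=0$ for every value of the unknown parameters $\Theta_1,\Theta_2$.

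\textbf{Well-definedness.} The only point requiring care is that all terms are defined at this point. Since $z_{1\rmd}=\overline{x}_1\phi(\chi_{1\rmd})$ with $\chi_{1\rmd}\in\SC$, the corresponding state $x=(\overline{x}_1\psi(\zeta_{1\rmd}),0)=(x_{1\rmd},0)$ lies in $\SSS$. Therefore $\partial_{\chi}\phi(\psi(\cdot))$, $g_1$, and $g_2$ are evaluated inside the admissible domain and are finite. For an arbitrary $z_{1\rmd}\in\BBR^n$ the same holds, because $\psi$ maps $\BBR^n$ into $\SC$.

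\textbf{Closed-loop consistency (optional remark).} At this point $e_1=0$, and $e_2=\Phi(\zeta_1)\psi(0)+0=0$. The control law \eqref{eq:u} then gives $u=-\overline{x}_2\SG_2^{-1}\hat p_2(\SF_2\hat\Theta_1+0)=0$, since $\SF_2(z_{1\rmd},0)=0$. The adaptation laws \eqref{eq:p2_dot} and \eqref{eq:theta_1_hat_dot} also vanish, because $\psi(\zeta_2)=0$. So the point is an equilibrium of the closed loop as well, with $u=0$ generated by the controller for any frozen estimates. I do not expect a real obstacle; the main subtlety is only this bookkeeping of well-definedness.
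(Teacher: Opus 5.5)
Your proposal is correct and follows the paper's proof: both evaluate the right-hand sides at $(z_{1\rmd},0,0)$, using Proposition~\ref{prop:equil_point_z1_dot} to get $\dot z_1=0$, and Proposition~\ref{prop:equil_point_z2_dot} together with $u=0$ to get $\dot z_2=0$. Your well-definedness check and your closed-loop remark are correct additions that the paper does not include; the remark shows that \eqref{eq:u}, \eqref{eq:p2_dot}, and \eqref{eq:theta_1_hat_dot} all vanish at this point.
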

\begin{proof}
    If $z_2=0,$ then it follows from Proposition \ref{prop:equil_point_z1_dot} that $\SG_1(z_1,z_2)=0,$ which implies that $\dot z_1 = 0.$
    Furthermore, it follows from Proposition \ref{prop:equil_point_z2_dot} that $\SF_2(z_1,z_2)=0.$
    Moreover, if $u = 0,$ then $\dot z_2 = 0,$ implying that $(z_1, z_2, u) \equiv (z_{1\rmd},0,0)$ is an equilibrium point.
\end{proof}
\begin{proposition}
     \label{prop:Omega_def}
    Consider the system \eqref{eq:z1_dot}, \eqref{eq:z2_dot}.
    Consider the Lyapunov candidate function \eqref{eq:LyapCandidate}.
    Let $k_1>0,$ $k_2 = k_1\inv,$ $\gamma>0,$ and $\alpha>0.$
    Let $\ell > 0.$ 
    Define the sublevel set
    \begin{align}
        \Omega
            \isdef
                \{
                    z_1,z_2,u,\hat{p}_2,\hat{\Theta}_1 \in
                    \mathbb{R}
                    : V
                    \leq \ell
                \}.
        \label{eq:Omega_def}
    \end{align}
    Then, $\Omega$ is bounded and a positively invariant set.
\end{proposition}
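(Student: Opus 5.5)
Positive invariance of $\Omega$ follows from the sign of $\dot V$; boundedness follows from the fact that each term of $V$ is nonnegative and radially unbounded in its own variable, plus the fact that $u$ is a continuous function of the other variables.

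\emph{Positive invariance.} With $k_2 = k_1^{-1}$, \eqref{eq:V_dot_k2condition} gives
\[
\dot V = -\bigl(\sqrt{k_1}e_1 - \sqrt{k_2}e_2\bigr)^2 \le 0
\]
along closed-loop trajectories of \eqref{eq:z1_dot}, \eqref{eq:z2_dot} under \eqref{eq:p2_dot}--\eqref{eq:u}. Hence $t\mapsto V(t)$ is nonincreasing. Any trajectory starting in $\Omega$ therefore satisfies $V(t)\le V(0)\le \ell$ for all $t$ in its interval of existence, so it stays in $\Omega$.

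\emph{Bounded state and estimates.} I would first check that every summand of \eqref{eq:LyapCandidate} is nonnegative.
\begin{itemize}
\item $\tfrac12 e_1^2\ge 0$.
\item The quadratic parameter terms are nonnegative because $\gamma,\alpha>0$. This needs $\Theta_2\neq 0$, which is implicit in the definition $p_2=\Theta_2^{-1}$.
\item $\SV(\zeta_2)\ge 0$, since by \cite{portella2025constrained} it is the integral of the sigmoid inverse $\psi$, with $\SV(0)=0$ and $\psi(s)s>0$ for $s\neq0$.
\end{itemize}
So $V\le\ell$ bounds each term by $\ell$. This gives:
\begin{itemize}
\item $|e_1|\le\sqrt{2\ell}$, hence $z_1$ bounded, since $z_{1\rmd}$ is fixed.
\item $|\hat p_2-p_2|\le\sqrt{2\gamma\ell/|\Theta_2|}$ and $|\hat\Theta_1-\Theta_1|\le\sqrt{2\alpha\ell}$, hence $\hat p_2,\hat\Theta_1$ bounded.
\item $\SV(\zeta_2)\le\ell$.
\end{itemize}
To convert the last bound into a bound on $z_2$, I would use that $\SV$ is radially unbounded: $\psi$ is strictly increasing with $\psi(s)\to\pm1$, so $\SV(s)\ge c(|s|-s_0)$ for large $|s|$. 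Thus the sublevel set $\{\SV\le\ell\}$ is a compact interval, and $z_2=\overline{x}_2\zeta_2$ is bounded.

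\emph{Bounded control input.} I expect this to be the main obstacle, because $u$ is not a state and $V$ does not depend on it. I would treat $u$ as the feedback \eqref{eq:u}, a function of $(z_1,z_2,\hat p_2,\hat\Theta_1)$, rather than as a free coordinate. On the compact set found above, $\zeta_1,\zeta_2$ are bounded. Hence $\psi(\zeta_i)$ stays in a compact subset of $(-1,1)$, and $(x_1,x_2)$ stays in a compact subset of $\SSS$, away from the boundary.

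On that compact subset, $\partial_\chi\phi$ is continuous and bounded away from zero, so $\SF_2$, $\Phi$ and $\SG_2$ are continuous. By Proposition \ref{prop_SG2_invertible}, $\SG_2$ is nonsingular, so $\SG_2^{-1}$ is continuous there and hence bounded. This step also assumes $g_1,f_2,g_2$ are continuous. Since $e_2$ is a continuous function of $(z_1,z_2)$, $u$ is a continuous function on a compact set, and so it is bounded.

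Therefore every coordinate listed in \eqref{eq:Omega_def} is bounded on $\Omega$, which completes the argument.
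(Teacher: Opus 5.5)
Your proof is correct and follows the same route as the paper: you get boundedness of $\Omega$ from the nonnegativity and radial unboundedness of the terms of $V$, and positive invariance from $\dot V = -\bigl(\sqrt{k_1}e_1-\sqrt{k_2}e_2\bigr)^2\le 0$. Your extra step is worthwhile: you treat $u$ as the continuous feedback \eqref{eq:u}, which is bounded on the compact set of $(z_1,z_2,\hat p_2,\hat\Theta_1)$ allowed by $V\le\ell$. This matters because $V$ does not depend on $u$, so the paper's appeal to radial unboundedness of $V$ alone does not literally bound that coordinate.
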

\begin{proof}
    Since $V$ is radially unbounded and $\Omega$ is defined by $V \leq \ell,$ it follows from Theorem $2$ in \cite{lasalle1960some} that $\Omega$ is bounded. 
    Since $\dot{V} \leq 0$ and $V > 0,$ $V$ is non-increasing along the trajectory of the system. 
    Therefore, any trajectory starting in $\Omega$ cannot leave $\Omega,$ thus implying that $\Omega$ is a positively invariant set.
\end{proof}

\begin{proposition}
    \label{prop:E_def}
    Consider the system \eqref{eq:z1_dot}, \eqref{eq:z2_dot}.
    Consider the function \eqref{eq:LyapCandidate} and the positively invariant set $\Omega$, given in \eqref{eq:Omega_def}.
    Let $k_2 = k_1\inv.$
    Define $E \subset \Omega$ such that
    $z_2 = 0.$
    Then, $E$ is the set of all the points in $\Omega$ such that $\dot{V} = 0.$
\end{proposition}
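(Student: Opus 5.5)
The plan is to characterize the zero set of $\dot V$ directly from the closed form \eqref{eq:V_dot_k2condition}, and then translate that algebraic condition back into the lifted coordinates using the definition of $e_2$ in \eqref{eq:e2_def}. I will prove both inclusions together by a chain of equivalences, each valid for any point of $\Omega$.

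First, with $k_2 = k_1\inv$, \eqref{eq:V_dot_k2condition} gives $\dot V = -\left(\sqrt{k_1}e_1 - \sqrt{k_2}e_2\right)^2$. Hence $\dot V = 0$ if and only if $\sqrt{k_1}e_1 = \sqrt{k_2}e_2 = e_2/\sqrt{k_1}$. Since $k_1>0$, this is equivalent to $e_2 = k_1 e_1$, which is consistent with the remark following \eqref{eq:V_dot_k2condition}.

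Next, I substitute \eqref{eq:e2_def}, namely $e_2 = \Phi(\zeta_1)\psi(\zeta_2) + k_1e_1$. Then $e_2 = k_1e_1$ holds if and only if $\Phi(\zeta_1)\psi(\zeta_2) = 0$. By Proposition \ref{prop:PHI_invertible}, $\Phi(\zeta_1)$ is nonsingular, so this reduces to $\psi(\zeta_2)=0$.

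The last step is to show that $\psi(\zeta_2)=0$ if and only if $\zeta_2 = 0$, i.e.\ $z_2 = \overline{x}_2\zeta_2 = 0$ by \eqref{eq:zeta2_def} with $\overline{x}_2>0$.
\begin{itemize}
\item The direction $\zeta_2=0 \Rightarrow \psi(\zeta_2)=0$ is the fact $\psi(0)=0$, already used in Propositions \ref{prop:equil_point_z1_dot} and \ref{prop:equil_point_z2_dot}.
\item The converse needs injectivity of $\psi$. I will invoke that $\psi$ is the inverse of the constraint-lifting function $\phi$ built from a strictly increasing sigmoid \cite{portella2025constrained}. A strictly increasing map vanishes at most once, so $\psi(\zeta_2)=0$ forces $\zeta_2=0$.
\end{itemize}
This is the only step that relies on a property not restated in the present paper. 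It is therefore the main point to make explicit; if needed, I would argue it via $\phi(\psi(\zeta))=\zeta$ and $\phi(0)=0$.

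Chaining the equivalences shows that, within $\Omega$, $\dot V = 0 \iff e_2=k_1e_1 \iff \psi(\zeta_2)=0 \iff z_2=0$. Note that the conditions involve no restriction on $z_1$, $\hat p_2$ or $\hat\Theta_1$. Hence $E=\{(z_1,z_2,u,\hat p_2,\hat\Theta_1)\in\Omega : z_2=0\}$ is exactly the set of points of $\Omega$ where $\dot V=0$.
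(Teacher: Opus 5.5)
Your proof is correct and follows the same chain as the paper: it links $z_2=0$, $\zeta_2=0$, $\psi(\zeta_2)=0$, $\Phi(\zeta_1)\psi(\zeta_2)=0$, $e_2=k_1e_1$ and $\dot V=0$ through \eqref{eq:zeta2_def}, \eqref{eq:e2_def} and \eqref{eq:V_dot_k2condition}. It is more complete than the paper's argument, which only establishes the implications from $z_2=0$ toward $\dot V=0$; your use of Proposition~\ref{prop:PHI_invertible} and of the injectivity of $\psi$ (via $\phi(\psi(\zeta))=\zeta$ and $\phi(0)=0$) supplies the reverse inclusion that the claimed equality requires.
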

\begin{proof}
    Note that it follows from \eqref{eq:zeta2_def} that $\zeta_2 = 0$ if and only if $z_2 = 0.$
    Next, $\zeta_2 = 0$ implies that $\psi(\zeta_2) = 0,$ which further implies that $\Phi(\zeta_1)  \psi(\zeta_2) = 0.$
    Next, it follows from \eqref{eq:e2_def} that $k_1e_1 = e_2.$
    Finally, since $k_2 = k_1\inv,$ it follows from \eqref{eq:V_dot} that $\dot{V} = 0$ if and only if $k_1e_1 = e_2.$
\end{proof}

\begin{proposition}
    \label{prop:M_def}
    Consider the system \eqref{eq:z1_dot}, \eqref{eq:z2_dot}.
    Consider the function \eqref{eq:LyapCandidate}.
    Let $k_2 = k_1\inv.$
    Let $M \subset E$ such that
    $u = 0.$ 
    Then, $M$ is the largest positively invariant set in $E.$
\end{proposition}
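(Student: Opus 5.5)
The plan is to characterize which trajectories can remain in $E$ for all $t\ge 0$, using the closed-loop equations \eqref{eq:z1_dot}, \eqref{eq:z2_dot} with the control law \eqref{eq:u}. By Proposition \ref{prop:E_def}, every point of $E$ satisfies $z_2=0$, equivalently $\zeta_2=0$ and $\psi(\zeta_2)=0$. A trajectory that stays in $E$ on an interval therefore has $z_2(t)\equiv 0$ there, and hence $\dot z_2\equiv 0$. The first step is to substitute this into \eqref{eq:z2_dot}.

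By Proposition \ref{prop:equil_point_z2_dot}, $\SF_2(z_1,0)=0$, so \eqref{eq:z2_dot} reduces to $0=\dot z_2=\SG_2(z_1,0)\,u\,\Theta_2$. Proposition \ref{prop_SG2_invertible} says $\SG_2$ is nonsingular, and $\Theta_2\neq 0$ is implicit, since $p_2=\Theta_2^{-1}$ is used. Hence $u\equiv 0$ along any trajectory contained in $E$. This shows that every invariant subset of $E$ lies in $\{u=0\}\cap E=M$.

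The next step is to show that $M$ itself is positively invariant. At points with $z_2=0$, Proposition \ref{prop:equil_point_z1_dot} gives $\dot z_1=0$. The estimator laws \eqref{eq:p2_dot} and \eqref{eq:theta_1_hat_dot} both carry the factor $\psi(\zeta_2)=0$, so $\dot{\hat p}_2=0$ and $\dot{\hat\Theta}_1=0$. With $u=0$ and $\SF_2=0$, we also get $\dot z_2=0$. Thus every point of $M$ is an equilibrium of the closed loop, which is consistent with Proposition \ref{prop:equilibriumPoint}. Trajectories starting in $M$ therefore remain there, and since $M\subset E$ is invariant and contains every invariant subset of $E$, it is the largest one.

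The main obstacle is that $u$ is not an independent state: it is given by \eqref{eq:u}. At $z_2=0$ we have $\SF_2=0$, so \eqref{eq:u} gives $u=-\overline{x}_2\SG_2^{-1}\hat p_2\,\Phi(\zeta_1)k_2e_2$. By Proposition \ref{prop:E_def}, $e_2=k_1e_1$ on $E$. Using Proposition \ref{prop:PHI_invertible} and $k_2>0$, the condition $u=0$ becomes $\hat p_2\, e_1=0$. I would argue that $\hat p_2\neq0$ along the relevant trajectories, or otherwise treat $\hat p_2=0$ as a degenerate case to exclude. This gives $e_1=0$, i.e.\ $z_1=z_{1\rmd}$, and identifies $M$ with the equilibrium set in the $(z_1,z_2)$ coordinates. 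I would present $M$ as the set where $z_2=0$ and $u=0$, noting that in the nondegenerate case this forces $z_1=z_{1\rmd}$. This is the step where extra care, namely the possibility $\hat p_2=0$, is needed.
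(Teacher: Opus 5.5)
Your proof is correct and follows essentially the same route as the paper's. Both arguments use $\SF_2(z_1,0)=0$ and the nonsingularity of $\SG_2$ (with $\Theta_2\neq 0$) to show that $u\neq 0$ at a point with $z_2=0$ forces $\dot z_2\neq 0$, so no invariant subset of $E$ can contain such a point. Both also note that points of $M$ are equilibria. Your explicit check that $\dot z_1$, $\dot z_2$, $\dot{\hat p}_2$ and $\dot{\hat\Theta}_1$ all vanish on $M$ is more complete than the paper's appeal to Proposition~\ref{prop:equilibriumPoint}, which only exhibits the point $z_1=z_{1\rmd}$ and ignores the estimator states.

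Your final paragraph is not needed for this statement, but the degeneracy you flag is real. Points with $z_2=0$ and $\hat p_2=0$ belong to $M$ for arbitrary $z_1$, so $M$ alone does not force $z_1=z_{1\rmd}$.
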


\begin{proof}
    It follows from Proposition \ref{prop:equilibriumPoint} that the point $ z_1 = z_{1\rmd}, \, z_2 = u = 0 $ is an equilibrium of the system. 
    Hence, the set of all equilibrium points constitutes a positively invariant subset of $ E $. 
    Next, consider the case where $ u \neq 0 $ and $ z_2 = 0 $. 
    From \eqref{eq:z2_dot} and the fact that $ \SG_2(z_1, z_2) $ is nonsingular for all 
    $ z_1, z_2 \in \mathbb{R}^n $, it follows that $ \dot{z}_2 \neq 0 $. 
    Consequently, the trajectory leaves the set $ E $. 
    Therefore, $ M \subset E $ is the largest positively invariant subset of $ E $.
\end{proof}

\begin{theorem}
    Consider the system \eqref{eq:x1_dot}, \eqref{eq:x2_dot} and the system \eqref{eq:z1_dot}, \eqref{eq:z2_dot}.
    Assume that the initial conditions of \eqref{eq:x1_dot}, \eqref{eq:x2_dot} are inside the safe set, that is, 
    $(x_1(0),x_2(0)) \in \SSS,$ and let $|x_{1\rmd}|<\overline{x}_1.$
    Consider the control law \eqref{eq:u}, with $k_2 = k_1^{-1}.$
    Then,
    \begin{align}
        \lim_{t\to\infty}z_1(t) 
            &= 
                z_{1\rmd},
        \label{eq:z1limit}
        \\
        \lim_{t\to\infty}z_2(t) 
            &= 
                0,
        \\
        \lim_{t\to\infty}u(t) 
            &= 
                0,
        \label{eq:ulimit}
        \\
        \lim_{t\to\infty}x_1(t) 
            &= 
                x_{1\rmd},
        \label{eq:x1limit}
        \\
        \forall t\geq0,
        (x_1(t), x_2(t)) 
            &\in
                \SSS,
        \label{eq:safeset_FI}
    \end{align}
    and the estimates $\hat p_2(t), \hat \Theta_1(t)$ remain bounded.
\end{theorem}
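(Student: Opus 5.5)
We will apply the Barbashin--Krasovskii--LaSalle invariance principle (Theorem 1 in \cite{lasalle1960some}) to the closed-loop system in the lifted coordinates. This system consists of \eqref{eq:z1_dot}, \eqref{eq:z2_dot}, the adaptation laws \eqref{eq:p2_dot}, \eqref{eq:theta_1_hat_dot}, and the control law \eqref{eq:u}. The plan has three steps: set up a compact positively invariant set, identify its limit set through Propositions \ref{prop:E_def} and \ref{prop:M_def}, and pull the conclusions back to the original coordinates through $\psi$.

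\textbf{Step 1: the invariant sublevel set.} Since $(x_1(0),x_2(0))\in\SSS$, we have $\chi_1(0),\chi_2(0)\in\SC$. Hence $z_1(0),z_2(0)$ are finite, because $\phi$ is defined on $\SC$, and so $V(0)$ in \eqref{eq:LyapCandidate} is finite. Likewise $|x_{1\rmd}|<\overline{x}_1$ makes $z_{1\rmd}$ in \eqref{eq:z1d} finite. We choose $\ell \isdef V(0)$. By Proposition \ref{prop:Omega_def}, with $k_2=k_1^{-1}$ so that \eqref{eq:V_dot_k2condition} gives $\dot V\le 0$, the set $\Omega$ is bounded and positively invariant. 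Four facts then follow:
\begin{itemize}
\item $e_1$, $\SV(\zeta_2)$, $\hat p_2-p_2$ and $\Theta_1-\hat\Theta_1$ stay bounded for all $t\ge0$.
\item Since $p_2$ and $\Theta_1$ are constants, the estimates $\hat p_2(t)$ and $\hat\Theta_1(t)$ are bounded.
\item Since $\SV$ is radially unbounded (it is a sigmoid integral), $z_2$ is bounded.
\item $z_1=e_1+z_{1\rmd}$ is bounded.
\end{itemize}
Because $z_1$ and $z_2$ remain finite for all time and $\psi$ maps $\BBR$ into $(-1,1)$, \eqref{eq:z1_to_x1}--\eqref{eq:z2_to_x2} give $|x_i(t)|<\overline{x}_i$ for all $t\ge0$. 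This is exactly \eqref{eq:safeset_FI}. Since the trajectory stays in a compact subset of the lifted space, the continuous functions $\SF_2$, $\SG_2^{-1}$ (by Proposition \ref{prop_SG2_invertible}) and $\Phi$ are bounded along it. Therefore $u$ in \eqref{eq:u} is bounded and solutions exist for all time.

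\textbf{Step 2: LaSalle.} Trajectories remain in the compact set $\Omega$ and $\dot V\le0$. The invariance principle therefore gives convergence to the largest invariant subset of $\{\dot V=0\}\cap\Omega$. By Proposition \ref{prop:E_def} this set is $E=\{z_2=0\}$, and by Proposition \ref{prop:M_def} its largest invariant subset is $M=\{z_2=0,\,u=0\}$. This yields $z_2\to0$ and $u\to0$.

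To obtain \eqref{eq:z1limit}, note the following chain on $M$. Since $\psi(0)=0$, \eqref{eq:e2_def} gives $e_2=k_1e_1$. Proposition \ref{prop:equil_point_z2_dot} gives $\SF_2=0$, so $u=0$ in \eqref{eq:u} forces $\hat p_2\,\Phi(\zeta_1)k_2e_2=0$. Since $\Phi$ is nonsingular (Proposition \ref{prop:PHI_invertible}), this gives $\hat p_2 e_1=0$.

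\textbf{Main obstacle.} The delicate step is excluding $\hat p_2=0$ with $e_1\neq0$, and more generally showing that $z_1\to z_{1\rmd}$ rather than just $z_1$ settling at an arbitrary constant. Two routes are possible:
\begin{itemize}
\item \emph{Preferred route: invariance of $M$.} On $M$ we have $\dot z_1=\SG_1(z_1,0)=0$, so $e_1$ is constant. Also $\dot{\hat p}_2$ from \eqref{eq:p2_dot} reduces to $\gamma\sigma\,\psi(0)(\cdots)=0$, so $\hat p_2$ is constant too. I would then argue that a point with $\hat p_2=0$ and $e_1\ne0$ lies in a lower-dimensional set. If necessary, I would refine $M$ by requiring invariance of $E$ under the full vector field: $\dot z_2=0$ on $E$ needs $\SG_2u\Theta_2=0$ at each instant, and $u$ vanishes identically only if $\hat p_2e_1\equiv0$. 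I would show that the trajectory can reach $\hat p_2=0$ only in the limit, using the sign-corrected adaptation $\sigma(\Theta_2)$ together with the bound $|\hat p_2-p_2|^2\le 2\gamma\ell/|\Theta_2|$. Choosing $\ell$ small enough relative to $p_2^2$ keeps $\hat p_2$ bounded away from zero, which then forces $e_1=0$.
\item \emph{Fallback route: Barbalat.} Integrating \eqref{eq:V_dot_k2condition} gives $\sqrt{k_1}e_1-\sqrt{k_2}e_2\in L_2$. Its derivative is bounded, so Barbalat's lemma gives $e_2-k_1e_1\to0$. Combined with $z_2\to0$, this gives $e_2-k_1e_1\to \Phi\psi(\zeta_2)\to0$.
\end{itemize}

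Finally, once $z_1\to z_{1\rmd}$, continuity of $\psi$ in \eqref{eq:z1_to_x1} gives $x_1=\overline{x}_1\psi(\overline{x}_1^{-1}z_1)\to \overline{x}_1\psi(\phi(\chi_{1\rmd}))=x_{1\rmd}$. This is \eqref{eq:x1limit}.
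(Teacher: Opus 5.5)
\textbf{There is a genuine gap: your proposal does not establish $z_1\to z_{1\rmd}$, and neither does the paper's proof.} Your Steps~1--2 follow the same Barbashin--Krasovskii--LaSalle route as the paper ($\Omega$ with $\ell=V(0)$, $E=\{z_2=0\}$, then $M$). The boundedness and forward-invariance parts are sound. Deriving \eqref{eq:safeset_FI} from finiteness of $z$ and $\psi(\BBR)\subset(-1,1)$ is cleaner than the paper's appeal to bijectivity of $\psi$.

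You also located the weak step correctly, which the paper's proof skips by reading \eqref{eq:z1limit} directly off $M$. On $E$ one has $\SF_2=0$ and $e_2=k_1e_1$, so $u=0$ only gives $\hat p_2e_1=0$. The largest invariant subset of $E$ is therefore $\{z_2=0,\ \hat p_2e_1=0\}$, which contains points with $z_1\neq z_{1\rmd}$.

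Neither of your routes closes this.
\begin{itemize}
\item The Barbalat fallback is circular. By \eqref{eq:e2_def}, $e_2-k_1e_1=\Phi(\zeta_1)\psi(\zeta_2)$ identically, so it only returns $z_2\to0$ and says nothing about $e_1$.
\item In the preferred route, lower dimensionality is irrelevant to LaSalle: the $\omega$-limit set may lie in a thin invariant set, and reaching $\hat p_2=0$ only in the limit is exactly what convergence to $M$ permits.
\item Also, $\ell$ is not yours to choose, because the sublevel set must contain the initial state, i.e., $\ell\ge V(0)$. Your bound keeps $\hat p_2$ away from zero only if $V(0)<|\Theta_2|p_2^2/(2\gamma)=1/(2\gamma|\Theta_2|)$. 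That restricts $e_1(0)$, $\zeta_2(0)$, $\hat p_2(0)$, $\hat\Theta_1(0)$ and is not a hypothesis of the theorem.
\end{itemize}

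No better argument can close this gap, because without such a hypothesis \eqref{eq:z1limit} and \eqref{eq:x1limit} are false.
\begin{itemize}
\item Every point with $z_2=0$, $\hat p_2=0$ is a closed-loop equilibrium: $\psi(0)=0$ gives $\dot z_1=\dot{\hat p}_2=\dot{\hat\Theta}_1=0$, and $u=0$ gives $\dot z_2=0$. So $x_2(0)=0$, $\hat p_2(0)=0$, $x_1(0)\neq x_{1\rmd}$ yields $x_1(t)\equiv x_1(0)$.
\item This set is not reached only by exceptional trajectories. Take the DC motor of Section~\ref{sec:numerical} ($\psi=\tanh$, $\overline{x}_2=1$) and linearize at such a point with $e_1^*\neq0$. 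In the $(z_2,\hat p_2)$ directions the block is $\left[\begin{smallmatrix}\Theta_1 & -\Theta_2ce_1^*\\ \gamma\sigma(\Theta_2)ce_1^* & 0\end{smallmatrix}\right]$ with $c=\cosh^2(\zeta_1^*)$. Its determinant is $\gamma|\Theta_2|c^2(e_1^*)^2>0$ and its trace is $\Theta_1$. The $(z_1,\hat\Theta_1)$ directions are tangent to the manifold of equilibria.
\item Hence for $\Theta_1<0$ this manifold is normally attracting, and an open set of initial conditions converges to limits with $e_1\neq0$.
\end{itemize}

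A correct statement needs an extra assumption such as $V(0)<1/(2\gamma|\Theta_2|)$. For example, it suffices that $\hat p_2(0)$ lies strictly between $0$ and $2p_2$ and $\gamma$ is small relative to the remaining initial errors. Under that assumption your preferred route does go through. Indeed, $|\hat p_2|\ge|p_2|-\sqrt{2\gamma V(0)/|\Theta_2|}>0$ on $\Omega$, so $M$ reduces to $\{z_2=0,\ e_1=0\}$.
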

\begin{proof}
    Consider the set $\Omega$ defined in Proposition \ref{prop:Omega_def} and the set $E$ defined in Proposition \ref{prop:E_def}. 
    It follows from Proposition \ref{prop:M_def} that $M$ is the largest positively invariant set in $E.$
    It follows from Theorem 2 in \cite{lasalle1960some} that any solution starting in $\Omega$ converge to $M$ as $t\to\infty,$ which implies \eqref{eq:z1limit}--\eqref{eq:ulimit}, and the estimates $\hat p_1(t), \hat \theta_1(t), \hat \vartheta_1(t), \hat \theta_2(t), \hat p_2(t) $ remain bounded.
    Next, it trivially follows from \eqref{eq:z1d} and \eqref{eq:z1limit} that $\lim_{t\to\infty}x_1(t) = x_{1\rmd}.$
    Finally, since the map $\psi$ is a bijection, it follows that $\forall t\geq0,
        (x_1(t), x_2(t)) \in \SSS.$
\end{proof}

\begin{remark} 
It can not be concluded that $\hat{\Theta}_1$ converges to $\theta_1$ and  $\hat{p}_2$ converges to $p_2$.
However, they remain bounded.
\end{remark}

% \newpage
\section{Numerical Simulation}
\label{sec:numerical}
In this section, we illustrate the proposed framework on the DC motor dynamics described by \eqref{eq:x1_dc_motor}--\eqref{eq:x2_dc_motor}. 
% The control objective is to regulate the motor angle to a desired reference in the presence of unknown physical parameters, while ensuring that the system state remains within a prescribed safe set. 
The control objective is to regulate the shaft position (angle) to a desired reference in the presence of unknown physical parameters, while ensuring that the system state remains within a prescribed safe set.
The safe set is defined as
\begin{align}
    \SSS \isdef (-2,2) \times (-1,1).
\end{align}

For the constraint-lifting construction, we use the sigmoid function $\psi(x) = \tanh(x)$ and its associated integral
\begin{align}
    \SV(\zeta_2) = \log\big(\cosh(\zeta_2)\big),
\end{align}
as described in \cite{portella2025constrained}.

% In this work, we use the sigmoid function $\psi(x) = \tanh(x),$ and its corresponding sigmoid integral function $\SV = \log(\cosh(\zeta_2))$, as presented in \cite{portella2025constrained}.

% The adaptation laws, with this choice of sigmoid function are
The adaptation laws corresponding to the chosen sigmoid function are given by
\begin{align}
    \hat{p}_2
        &=
            -
            \gamma
            \sigma(\Theta_2)
            \tanh{\zeta_2}
            \Big(
                \tanh{(\zeta_2)}
                \cosh^2{(\zeta_2)}
                \hat{\Theta}_1
    \nn \\
    & \quad
                +
                \overline{x}_2
                \cosh^2{(\zeta_1)}
                k_1^{-1}
                e_2
            \Big),
    \\
    \hat{\Theta}_1
        &=
            \alpha
            \tanh^2{(\zeta_2)}
            \cosh^2{(\zeta_2)},
\end{align}
and the corresponding controller is given by
\begin{align}
    u
        &=
            -
            \overline{x}_2
            \sech^2{(\zeta_2)}
            \hat{p}_2
            \Big(
                \cosh^2{(\zeta_2)}
                \tanh{(\zeta_2)}
                \hat{\Theta}_1
    \nn \\
    & \quad
                +
                \overline{x}_2
                \cosh^2{(\zeta_1)}
                k_1^{-1}
                e_2
            \Big).
\end{align}
Note that the initial condition $\hat{p}_2(0)$ cannot be chosen as zero, since this would result in $u \equiv 0$ at the initial time.

The initial conditions are selected as
\begin{align}
    x_1(0) = 0, \quad x_2(0) = 0.9,
\end{align}
with parameter estimates initialized as
\begin{align}
    \hat{p}_2(0) = 1, \quad \hat{\Theta}_1(0) = 0.
\end{align}
The control gain is set to $k_1 = 1$, and the adaptation gains are chosen as $\gamma = \alpha = 1$.
% The constraint bounds are given by $\overline{x}_1 = 2$ and $\overline{x}_2 = 1$.
% 
To evaluate performance near the boundary of the safe set, the desired state is selected close to the constraint limit. 
In particular, we set $x_{1\rmd} = -1.9.$

% Note that the initial condition $\hat{p}_2(0)$ can not be set to $0.$ Otherwise, $u = 0$ inmediately.
% %
% Therefore, in this example, we set the initial states $x_1(0) = 0,$ $x_2(0) = 0.9,$ and initial estimations $\hat{p}_2(0) = 1,$ and $\hat{\Theta}_1(0) = 0.$
% %
% We set $k_1 = 1,$ and the adaptatio gains, $\gamma = \alpha = 1.$
% %
% In this example $\overline{x}_1 = 2,$ and $\overline{x}_2 = 1.$
% %
% We also, on purpose, set the desired state close to the bound $\overline{x}_1,$ that is $x_{1\rmd} = -1.9.$

Figure \ref{fig:closed_loop_motor} shows the closed-loop response of \eqref{eq:x1_dc_motor}--\eqref{eq:x2_dc_motor} under the control law \eqref{eq:u} and the adaptation laws \eqref{eq:p2_dot} and \eqref{eq:theta_1_hat_dot}. 
The results demonstrate successful tracking of $x_{1\rmd}$ while maintaining the state within the prescribed safe set, indicated by the shaded yellow region in the first plot. 
Furthermore, the state $x_2$ remains within its corresponding safe bounds, as shown by the shaded pink region in the second plot.

% Figure \ref{fig:closed_loop_motor} shows the closed-loop response of \eqref{eq:x1_dc_motor}--\eqref{eq:x2_dc_motor} with the cotroller \eqref{eq:u}, and the adaptation laws \eqref{eq:p2_dot} and \eqref{eq:theta_1_hat_dot}.
% %
% Note that the closed-loop succesfully tracks $x_{1\rmd}$ while staying inside the safe area shown in shaded yellow in the first plot.
% %
% Note also that $x_2$ is always the safe area, shown in shaded pink in the second plot.

% Figure \ref{fig:estimations_motor} shows the estimation errors $|\hat{\Theta}_1 - \Theta_1|,$ and $|\hat{p}_2 - p_2|$ in logarithmic scale, obtained from \eqref{eq:theta_1_hat_dot} and \eqref{eq:p2_dot}, respectively.
% %
% Note that, as shown in the stability analysis, neither $\hat{\Theta}_1$ nor $\hat{p}_2$ converges to the true values $\Theta_1$ and $p_2$; however, both estimates remain bounded.
Figure \ref{fig:estimations_motor} presents the estimation errors $|\hat{\Theta}_1 - \Theta_1|$ and $|\hat{p}_2 - p_2|$ on a logarithmic scale, corresponding to the update laws \eqref{eq:theta_1_hat_dot} and \eqref{eq:p2_dot}, respectively. 
Consistent with the stability analysis, neither $\hat{\Theta}_1$ nor $\hat{p}_2$ converges to the true parameters $\Theta_1$ and $p_2$; however, both estimates remain bounded.
\begin{figure}[!ht]
    \centering
    \includegraphics[width=\columnwidth]{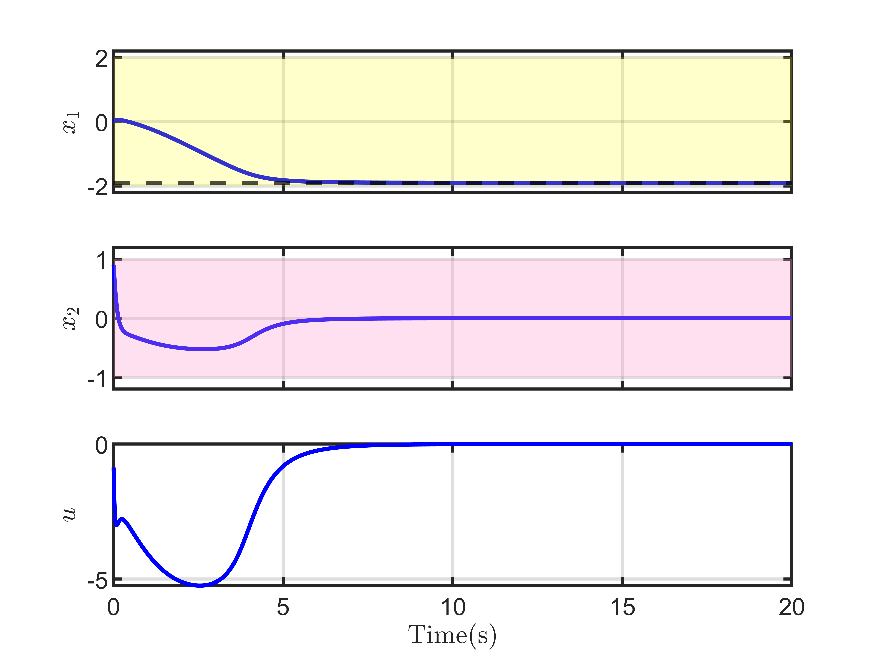}
    \caption{
    Closed-loop response of the system \eqref{eq:x1_dc_motor}--\eqref{eq:x2_dc_motor} under the control law \eqref{eq:u}. 
    The first and second plots show the states $x_1$ and $x_2$, respectively, while the third plot shows the control input $u$.
    }
    \label{fig:closed_loop_motor}
\end{figure}

\begin{figure}[!ht]
    \centering
    \includegraphics[width=0.9\columnwidth]{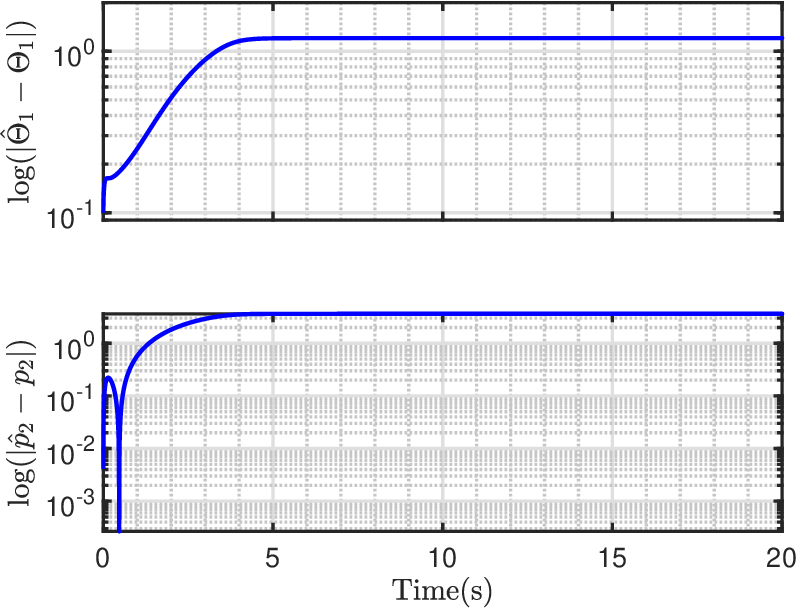}
    \caption{
    Estimation errors $|\hat{\Theta}_1 - \Theta_1|$ and $|\hat{p}_2 - p_2|$ shown on a logarithmic scale. 
    Consistent with the stability analysis, the parameter estimates do not converge to the true values $\Theta_1$ and $p_2$, but remain bounded.
    }
    \label{fig:estimations_motor}
\end{figure}

% \newpage
\section{Conclusions}
\label{sec:conclusions}

This paper developed an adaptive constraint-lifting control framework for a class of nonlinear systems with multiplicative uncertainties and unknown parameters. 
The proposed approach transforms the constrained control problem into an equivalent unconstrained representation, enabling systematic controller synthesis in the presence of parametric uncertainty. 
An adaptive control law was constructed to compensate for uncertainties in both the input gain and the system dynamics while ensuring forward invariance of a prescribed safe set. 
A Lyapunov-based analysis established boundedness of all closed-loop signals and asymptotic convergence of the system states. 
Numerical simulations demonstrated the effectiveness of the proposed method in achieving safe tracking under uncertainty.

% This paper extended a constraint-enforcing control framework to a class of nonlinear systems with multiplicative uncertainties and unknown parameters in the system dynamics.
% %
% The proposed approach preserves the transformation of the constrained control problem into an equivalent unconstrained one, enabling controller synthesis in the presence of parametric uncertainty.
% %
% An adaptive control law was developed to compensate for uncertainties in both the input gain and the dynamic map, while maintaining the positive invariance of a prescribed safe set.
% %
% A Lyapunov-based analysis establishes boundedness of all closed-loop signals and asymptotic convergence of the system states.
% %
% Numerical simulations demonstrate the effectiveness of the proposed approach in enforcing state constraints under uncertainty.

\appendices
\section{Dynamics in \texorpdfstring{$z$}{z} Coordinate}
\label{ap:derivation of z_dynamics}
This appendix derives the system dynamics in the lifted $z$-coordinates used for controller synthesis.

It follows from \eqref{eq:x1_to_z1} that 
\begin{align}
    \dot{z}_1
        &=
            \overline{x}_1
            \partial_{\chi_1} 
            \phi(\chi_1) 
            \dot \chi_1
        \nn \\ 
        &=
            \overline{x}_1
            \partial_{\chi_1} 
            \phi(\chi_1) 
            \overline{x}_1^{-1}
            \dot x_1
        \nn \\ 
        &=
            \partial_{\chi_1} 
            \phi(\chi_1)
            g_1(x_1)
            x_2
        \nn \\ 
        &=
            \partial_{\chi_1} 
            \phi(\psi_1(\zeta_1)) 
            g_1(\overline{x}_1\psi_1(\zeta_1))
            \overline{x}_2
            \psi_2(\zeta_2)
        \nn \\
        &=
            \Phi(\zeta_1) 
            \psi_2(\zeta_2)
        \nn \\
        &=
            \SG_1(z_1,z_2).
\end{align}
Similarly, it follows from \eqref{eq:x2_to_z2} that 
\begin{align}    
    \dot{z}_2
        &=
            \overline{x}_2
            \partial_{\chi_2} 
            \phi(\chi_2) 
            \dot \chi_2
        \nn \\ 
        &=
            \partial_{\chi_2} 
            \phi(\chi_2)
            \overline{x}_2
            \overline{x}_2^{-1}
            \dot x_2
        \nn \\ 
        &=
            \partial_{\chi_2} 
            \phi(\chi_2)
            \left(
                f_2(x_1, x_2) 
                + 
                g_2(x_1,x_2) u
            \right)
        \nn \\ 
        &=
            \partial_{\chi_2} 
            \phi(\psi_2(\zeta_2))
            \Big(
                f_2
                (
                    \overline{x}_1
                    \psi_1(\zeta_1),
                    \overline{x}_2
                    \psi_2(\zeta_2)
                ) 
        \nn\\
        & \quad
                + 
                g_2
                (
                    \overline{x}_2\psi_1(\zeta_1),
                    \overline{x}_2\psi_2(\zeta_2)
                ) 
                u 
            \Big)
        \nn \\
        &=
            \SF_2(z_1,z_2)  + \SG_2(z_1,z_2) u.
\end{align}

\section{Time-derivative of Sigmoid Integral-based Lyapunov Function}
\label{ap:appendix_sigmoid integral definition}
This appendix computes the time derivative of the sigmoid integral function used in the Lyapunov candidate, which is required for the stability analysis.

Note that 
\begin{align}
    \rmd_t 
    % \sum_{i=1}^n
            \SV(\zeta_{2})
                &=
                    % \sum_{i=1}^n
                   \rmd_t
                   % \int_0^{\zeta_{2i}}
                   \sigma(s)
                   \rmd s.
\end{align}
% For each $i = 1, \ldots, n,$
%
It follows from the Leibniz's formula that
\begin{align}
    \rmd_t
       % \int_0^{\zeta_{2i}}
       \sigma(s)
       \rmd s
        =
            \sigma(\zeta_{2})
            \dot{\zeta}_{2},
\end{align}
which, using \eqref{eq:zeta2_def}, implies that
\begin{align}
    \rmd_t 
    % \sum_{i=1}^n
            \SV(\zeta_{2})
                &=
                    % \sum_{i=1}^n
                   \sigma(\zeta_{2})
                   \dot{\zeta}_{2}
                =
                    \psi(\zeta)
                   \dot{\zeta}_{2}
                \nn \\
                &=
                    \psi(\zeta_2)
                    \overline{x}_2^{-1}
                    \dot{z}_2. 
\end{align}

\printbibliography
\end{document}